\documentclass[12pt]{amsart}

\usepackage[margin=1in]{geometry}

\usepackage{amsmath,amscd,amssymb,amsfonts,latexsym,wasysym, mathrsfs, mathtools,hhline,color}
\usepackage[all, cmtip]{xy}
\usepackage{csquotes}
\usepackage{url}
\usepackage{comment}

\definecolor{hot}{RGB}{65,105,225}

\usepackage[pagebackref=true,colorlinks=true, linkcolor=hot ,  citecolor=hot, urlcolor=hot]{hyperref}
\usepackage{ textcomp }
\usepackage{ tipa }
\usepackage{graphicx,enumerate}

\usepackage{geometry}
\geometry{left=2.7cm, right=2.7cm, bottom=1.3in, top=1.3in, includefoot}

\theoremstyle{plain}
\newtheorem{theorem}{Theorem}[section]
\newtheorem{prop}[theorem]{Proposition}

\newtheorem{lm}[theorem]{Lemma}

\newtheorem{cor}[theorem]{Corollary}

\newtheorem{lemma}[theorem]{Lemma}
\newtheorem{thrm}[theorem]{Theorem}

\theoremstyle{definition}

\newtheorem{defn}[theorem]{Definition}

\newtheorem{rmk}[theorem]{Remark}

\newtheorem{ex}[theorem]{Example}
\newtheorem*{ex*}{Example}

\def\be{\begin{equation}}
\def\ee{\end{equation}}

\def\bt{\begin{thrm}}
\def\et{\end{thrm}}

\def\bc{\begin{cor}}
\def\ec{\end{cor}}

\def\br{\begin{rmk}}
\def\er{\end{rmk}}

\def\bp{\begin{prop}}
\def\ep{\end{prop}}

\def\bl{\begin{lm}}
\def\el{\end{lm}}

\def\bex{\begin{ex}}
\def\eex{\end{ex}}

\def\bd{\begin{defn}}
\def\ed{\end{defn}}

\newcommand{\CP}{{\mathbb{P}}}

\newcommand{\C}{\mathbb{C}}

\newcommand{\Rad}{\mathrm{Rad}}
\newcommand{\GA}{\mathrm{GA}}

\newcommand{\sH}{\mathcal{H}}

\newcommand{\sA}{\mathcal{A}}

\newcommand{\sP}{\mathcal{P}}
\newcommand{\sF}{\mathcal{F}}

\newcommand{\bU}{\mathbf{U}}

\newcommand{\bV}{\mathbf{V}}
\newcommand{\bD}{\mathbb{D}}

\title[]{Brylinski-Radon transformation and generic projections}

\author{Yongqiang Liu}
\address{Y. Liu: The Institute of Geometry and Physics, University of Science and Technology of China, 96 Jinzhai Road, Hefei 230026, China} 
\email{liuyq@ustc.edu.cn}
\author{Laurentiu Maxim}
\address{L. Maxim: Department of Mathematics,         University of Wisconsin-Madison,  480 Lincoln Drive, Madison WI 53706-1388, USA
\newline
{\text and} Institute of Mathematics of the Romanian Academy, P.O. Box 1-764, 70700 Bucharest, ROMANIA.}
\email {maxim@math.wisc.edu}
\author{Botong Wang}
\address{B. Wang: Department of Mathematics,         University of Wisconsin-Madison,  480 Lincoln Drive, Madison WI 53706-1388, USA.}
\email {wang@math.wisc.edu}

\date{\today}

\keywords{Brylinski-Radon transformation, perverse sheaves}

\subjclass[2010]{32S22, 32S60.}

\begin{document}

\maketitle

\begin{abstract}
Using the Brylinski-Radon transformation, we prove that under a generic surjective linear map $F\colon  \C^n\to \C^m$, the pushforward of a perverse sheaf is perverse, modulo shifts of constant sheaves. 
\end{abstract}

\section{Introduction}
Given a smooth affine algebraic variety $Y\subset \C^n$, it is well-known that for a generic linear function $l: \C^n\to \C$, the restriction $l|_Y$ is a holomorphic Morse function with no critical points along infinity. Moreover, if $Y$ is singular, the same statement holds in the stratified sense (see \cite[Lemma 3.1]{STV} for the precise formulation). One can use this fact to relate the Chern-Mather Euler characteristics and Chern-Mather classes of affine varieties to the polar degrees (\cite{STV, ST}). Partially motivated by these results, we investigate the direct pushforward of perverse sheaves under generic surjective linear maps.

We work with constructible complexes and perverse sheaves over a commutative ring $S$ which is Noetherian and of finite cohomological dimension. The perverse $t$-structure is defined by using the middle perversity $p$, and we denote by $p^+$ the dual perverse $t$-structure, which in general is not defined by a perversity function. Roughly speaking, a bounded constructible complex $\sP$ is in the heart of the dual $t$-structure $p^+$ if its Verdier dual $\bD(\sP)$ is a perverse sheaf with respect to $p$. See, e.g., \cite[Section 10.2.2]{MS} for an introduction to perverse sheaves, and also \cite[Section 10.2.3]{MS} for a brief account on the dual perverse $t$-structure.

The main result of this note is the following.
\begin{theorem}\label{thm_main}
Let $\sP$ be a perverse sheaf on $\C^n$, and let $F\colon \C^n\to \C^m$ be a generic linear projection with $1\leq m\leq n-1$. Then, up to (shifts of) constant sheaves on $\C^m$, $RF_*(\sP)$ is a perverse sheaf. More precisely, the perverse cohomology sheaf $^p\sH^k(RF_*(\sP))$ is the shift of a constant sheaf on $\C^m$ for all $k\neq 0$. Moreover, the statement also holds if we replace the standard perverse $t$-structure $p$ by the dual perverse $t$-structure $p^+$. 
\end{theorem}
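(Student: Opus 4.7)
The plan is to compactify $F$ and invoke a Brylinski--Radon type preservation-of-perversity theorem, then restrict back to the affine chart. Extend $F$ to a rational map $\bar F:\CP^n\dashrightarrow\CP^m$; its indeterminacy locus $L=\mathbb{P}(\ker F)\subset H_\infty$ is a linear subspace of dimension $n-m-1$ in the hyperplane at infinity $H_\infty$. Let $\pi:\widetilde{\CP}^n\to\CP^n$ be the blow-up of $L$, and let $\widetilde F:\widetilde{\CP}^n\to\CP^m$ be the resolved morphism, which is a $\CP^{n-m}$-bundle. Write $j:\C^n\hookrightarrow\CP^n$ and $\tilde\jmath:\C^n\hookrightarrow\widetilde{\CP}^n$ for the open embeddings; since both complements are Cartier divisors, both embeddings are affine, so $j_!,Rj_*,\tilde\jmath_!,R\tilde\jmath_*$ all preserve perversity.

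Because $\pi$ is an isomorphism over $\C^n$, a stalk computation yields $\tilde\jmath_!\sP\cong\pi^*(j_!\sP)$ on $\widetilde{\CP}^n$. Combined with the properness of $\widetilde F$, this gives
\[
RF_!(\sP)\;\cong\;\bigl(R\widetilde F_*(\pi^*j_!\sP)\bigr)\big|_{\C^m}.
\]
The operation $\mathcal{Q}\mapsto R\widetilde F_*(\pi^*\mathcal{Q})$ is precisely the Brylinski--Radon transform for the incidence variety $\widetilde{\CP}^n$, viewed as parametrizing the $\CP^m$-family of $(n-m)$-planes in $\CP^n$ containing $L$. I would then invoke, or prove, the preservation-of-perversity statement in this setting: for any perverse sheaf $\mathcal{Q}$ on $\CP^n$, ${}^p\sH^k(R\widetilde F_*(\pi^*\mathcal{Q}))$ is a shift of a constant sheaf on $\CP^m$ for every $k\neq 0$. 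Applied to $\mathcal{Q}=j_!\sP$ and restricted to $\C^m$, this yields the theorem for $RF_!(\sP)$.

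The $p^+$-statement for $RF_*$ follows at once by Verdier duality: $\bD\circ RF_!=RF_*\circ\bD$, and $\bD$ interchanges $p$ with $p^+$ while preserving constant sheaves up to shift. For the $p$-statement about $RF_*$, I would run the parallel argument with $R\tilde\jmath_*\sP$ and $Rj_*\sP$ in place of $\tilde\jmath_!\sP$ and $j_!\sP$: the identification $RF_*(\sP)\cong R\widetilde F_*(R\tilde\jmath_*\sP)|_{\C^m}$ still holds, but $R\tilde\jmath_*\sP$ and $\pi^*(Rj_*\sP)$ differ on the exceptional divisor $E=\pi^{-1}(L)$, over which $\widetilde F$ is a $\CP^{n-m-1}$-bundle onto $\CP^m$, so the discrepancy pushes forward to a complex with locally constant cohomology on $\CP^m$ that restricts to a constant sheaf on $\C^m$. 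The main obstacle is the Brylinski--Radon preservation-of-perversity theorem itself in this setting: Brylinski's classical result concerns the full duality between $\CP^n$ and its dual projective space, whereas here the correspondence is carved out by the choice of $L$, and genericity of $F$ (equivalently, of $L$) is essential to ensure that only shifts of constant sheaves occur as error terms.
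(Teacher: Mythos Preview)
Your overall strategy---compactify, apply a Radon-type transform, then restrict to the affine chart---matches the paper's, but the choice of target space creates a genuine gap that you identify but do not close.

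The core issue is your ``small'' Brylinski--Radon transform $\mathcal{Q}\mapsto R\widetilde F_*(\pi^*\mathcal{Q})$ with target $\CP^m$. Brylinski's proof that the Radon transform is perverse $t$-exact modulo constants uses \emph{two} distinguished triangles, one bounding ${}^p\sH^k$ for $k<0$ and the dual one for $k>0$. The second triangle requires identifying $i^!_{\bV}p_1^*$ with $i^*_{\bV}p_1^*$ up to shift, which in turn needs the first projection $p_1\circ i_{\bV}$ from the incidence variety to $\CP^n$ to be \emph{smooth}. In your setting this projection is $\pi\colon\widetilde{\CP}^n\to\CP^n$, the blow-up along $L$, which is not smooth; consequently $\pi^*\mathcal{Q}$ and $\pi^!\mathcal{Q}$ genuinely differ (even for $\mathcal{Q}=j_!\sP$: although the stalks of $j_!\sP$ vanish along $L$, its support can meet $L$, and then $\pi^!j_!\sP$ acquires contributions along the exceptional divisor that $\pi^*j_!\sP=\tilde\jmath_!\sP$ does not). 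So only one half of the perverse bound is available, and your small transform is \emph{not} $t$-exact modulo constants for arbitrary perverse $\mathcal{Q}$---already for $n=2$, $m=1$, $\sP$ the constant sheaf on a line parallel to $\ker F$, one finds ${}^p\sH^1(R\widetilde F_*\pi^*j_!\sP)$ is a skyscraper, not a constant sheaf. You note that genericity of $L$ should rescue this, but do not say how, and there is no evident ``intrinsic'' argument on $\CP^m$.

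The paper's resolution is to replace your $\CP^m$ by the full Grassmannian $G(n+1,n-m+1)$. There the incidence variety $\bV$ has \emph{both} projections smooth, so Brylinski's argument goes through verbatim and the Radon transform is $t$-exact modulo constants with no genericity hypothesis. Genericity is then used only at the \emph{restriction} step: one identifies $RF_*(\sP)$ (not $RF_!$) with $\iota_N^*\Rad(Rj_{\C^n*}\sP)$ up to shift, where $\iota_N\colon\C^m\hookrightarrow G(n+1,n-m+1)$ embeds $\C^m=\C^n/N$ as the locus of $(n-m)$-planes that are closures of translates of $N=\ker F$; for generic $N$ this embedding is non-characteristic (Kleiman transversality under the affine group action), so $\iota_N^*[-m(n-m)]$ is perverse $t$-exact and sends constant sheaves to constant sheaves. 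In effect, your $\CP^m$ sits inside the Grassmannian as the linear system of $(n-m)$-planes through $L$, and the paper works on the open piece $\C^m\subset\CP^m\subset G$ where transversality can actually be arranged. Your discrepancy argument on the exceptional divisor for the $RF_*$ case is also sketchy (there is no reason $R\tilde\jmath_*\sP|_E$ should be pulled back from $L$), whereas the paper bypasses this entirely by treating $RF_*$ directly via $Rj_{\C^n*}\sP$, which is already perverse since $j_{\C^n}$ is an affine open immersion.
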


Applying Verdier duality, we also obtain analogous statements for the direct image with compact support. 
\begin{cor}\label{cor_1}
Under the above notations, the perverse cohomology sheaf $^p\sH^k(RF_!(\sP))$ is the shift of a constant sheaf on $\C^m$ for all $k\neq 0$. Moreover, the statement also holds if we replace the standard perverse $t$-structure $p$ by the dual perverse $t$-structure $p^+$. 
\end{cor}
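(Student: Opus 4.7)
The plan is to derive Corollary \ref{cor_1} directly from Theorem \ref{thm_main} via Verdier duality; the corollary is therefore purely formal once the theorem is in hand. The key identity is $RF_!(\sP) \cong \bD\bigl(RF_*(\bD(\sP))\bigr)$, combined with the fact that $\bD$ interchanges the two perverse $t$-structures and satisfies ${}^p\sH^k \circ \bD \cong \bD \circ {}^{p^+}\sH^{-k}$ on perverse cohomology (and symmetrically with $p$ and $p^+$ swapped).

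For the $p$-statement of the corollary, I would start with a $p$-perverse sheaf $\sP$ on $\C^n$, so that $\bD(\sP)$ lies in the heart of the dual $t$-structure $p^+$. Applying the $p^+$-version of Theorem \ref{thm_main} to $\bD(\sP)$ shows that ${}^{p^+}\sH^{-k}(RF_*(\bD(\sP)))$ is a shift of a constant sheaf on $\C^m$ for every $k \neq 0$. Dualizing back gives
\[
{}^p\sH^k(RF_!(\sP)) \;\cong\; {}^p\sH^k\bigl(\bD(RF_*(\bD(\sP)))\bigr) \;\cong\; \bD\bigl({}^{p^+}\sH^{-k}(RF_*(\bD(\sP)))\bigr),
\]
which for $k\neq 0$ is the Verdier dual of a shift of a constant sheaf on $\C^m$.

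For the $p^+$-statement, I would run exactly the same argument with the roles of $p$ and $p^+$ interchanged: given a $p^+$-perverse $\sP$, its dual $\bD(\sP)$ is $p$-perverse, so the $p$-version of Theorem \ref{thm_main} applies to it, and taking Verdier duals descends the conclusion to ${}^{p^+}\sH^k(RF_!(\sP))$ for $k \neq 0$.

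There is no real obstacle to this argument. The only nonformal input is the trivial observation that $\bD$ preserves the class of shifts of constant sheaves on $\C^m$, which follows from $\omega_{\C^m}\cong S_{\C^m}[2m]$ on the smooth variety $\C^m$. The entire content of the corollary is therefore contained in Theorem \ref{thm_main}, and the only thing I would be careful about when writing it out is keeping the indexing and the two $t$-structures straight across the Verdier dualities.
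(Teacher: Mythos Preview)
Your proposal is correct and is essentially identical to the paper's own proof: both use $RF_!\cong \bD\circ RF_*\circ \bD$, the identity ${}^p\sH^k\circ\bD\cong \bD\circ{}^{p^+}\sH^{-k}$, and then invoke the $p^+$-part (respectively the $p$-part) of Theorem~\ref{thm_main} together with the fact that Verdier duality preserves shifts of constant sheaves on $\C^m$. There is nothing to add.
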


Our initial motivation for Theorem \ref{thm_main} was to study the topology of hyperplane arrangement complements. Let $\sA=\{H_1, \ldots, H_d\}$ be an affine hyperplane arrangement in $\C^n$, and let $U_\sA=\C^n\setminus (H_1\cup \cdots \cup H_d)$ be its complement. Then under any surjective linear map $F\colon \C^n\to \C^m$, the fiber of $F|_{U_\sA}\colon {U_\sA} \to \C^m$ is either empty or a hyperplane arrangement complement of smaller dimension. This structure allows us to induct on dimension to study the topology of ${U_\sA}$. See \cite{LMW} for some results in this direction. 

For proving Theorem  \ref{thm_main}, we make use of the Brylinski-Radon transformation, e.g., see \cite{B86}. But since we use a more general form of this transformation (over a more general ring instead of a field) than the one appearing in \cite{B86}, we reproduce here Brylinski's arguments in this more general form. In a private communication, J\"org Sch\"urmann informed us that, when $m=1$, our main result from Theorem  \ref{thm_main} can also be proved by using stratified Morse theory. This fact opens the door for further exploration of the seemingly deep relation between the Brylinski-Radon transformation and stratified Morse theory.


\section{Brylinski-Radon transformation}\label{sec_BR}
In this section, we recall the definition of Brylinski-Radon transformation and its well-known properties. 
Let $G(n+1, r+1)$ be the Grassmannian  parametrizing $r$-dimensional projective subspaces of $\CP^n$. The universal family $\bV$ of $r$-dimensional projective  subspaces is defined by 
\[
\mathbf{V}=\{(x, V)\in \CP^n\times G(n+1, r+1)\mid x\in V\}. 
\]
Note that $\dim \bV=n+r(n-r).$
Let ${\bU}=\CP^n\times G(n+1, r+1)\setminus \bV$. Let $i_\bV\colon \bV\to \CP^n\times G(n+1, r+1)$ and $j_\bU\colon \bU\to \CP^n\times G(n+1, r+1)$ be the inclusion maps. Let $p_1\colon \CP^n\times G(n+1, r+1)\to \CP^n$ and $p_2\colon \CP^n\times G(n+1, r+1)\to G(n+1, r+1)$ be the two projection maps. 

\bd
The {\it Brylinski-Radon transformation} is defined by 
\[\Rad(-): D^b_c(\CP^n, S)\to D^b_c\big(G(n+1, r+1), S\big),\quad \sF\mapsto Rp_{2*}Ri_{\bV*}i_{\bV}^*p_1^*(\sF)[r(n-r)].\]
\ed

A well-known fact about the Brylinski-Radon transformation is its exactness  with respect to the perverse $t$-structure, modulo (shifts of) constant sheaves. Here, instead of working over a field as in Brylinski's original paper, we need to work over a more general ring. 
For completeness, we provide here the full proof of this result, which serves both as a review of Brylinski's arguments and as an explanation of how to adapt his arguments to our setting. 
\begin{theorem}\cite[Théorème 5.5]{B86}\label{thm_BR}
Modulo (shifts of) constant sheaves, the Brylinski-Radon transformation is exact with respect to the perverse $t$-structure. More precisely, for any perverse sheaf $\sP$ on $\CP^n$, 
the perverse cohomology sheaf $^p\sH^k(\Rad(\sP))$ is the shift of a constant sheaf on $G(n+1, r+1)$ for any $k\neq 0$. Additionally, if $\sP$ is in the heart of the dual perverse $t$-structure $p^+$, then $^{p^+}\sH^k(\Rad(\sP))$ is also the shift of a constant sheaf on $G(n+1, r+1)$ for any $k\neq 0$.
\end{theorem}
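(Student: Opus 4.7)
The plan is to express $\Rad(\sP)$ as part of a distinguished triangle whose other two terms have computable perverse cohomology, and then to extract the statement from the long exact sequence of perverse cohomology on $G(n+1,r+1)$.

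I would begin with the adjunction triangle for the closed-open decomposition $\CP^n\times G(n+1,r+1)=\bV\sqcup\bU$,
\[Rj_{\bU!}j_{\bU}^{*}p_1^{*}\sP\longrightarrow p_1^{*}\sP\longrightarrow Ri_{\bV*}i_{\bV}^{*}p_1^{*}\sP\xrightarrow{+1},\]
and apply $Rp_{2*}[r(n-r)]$. Since $p_2$ is proper, $Rp_{2*}=Rp_{2!}$. The middle term then becomes $R\Gamma(\CP^n,\sP)\otimes^{L}_{S}S_{G}[r(n-r)]$ by K\"unneth, a bounded complex of constant sheaves on $G$ whose perverse cohomology sheaves are all shifts of constant sheaves. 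The right-hand term is $\Rad(\sP)$ by definition. It therefore suffices to prove that the leftmost term $Rq_!\,p^{*}\sP[r(n-r)]$, with $p=p_1\circ j_{\bU}$ and $q=p_2\circ j_{\bU}$, also has perverse cohomology sheaves which are shifts of constant sheaves on $G$; granted this, the long exact sequence of perverse cohomology, combined with the fact that the full subcategory of the perverse heart spanned by shifts of constant sheaves on the smooth, simply-connected Grassmannian $G$ is closed under subs, quotients, and extensions, will force each ${}^{p}\sH^{k}(\Rad(\sP))$ for $k\neq 0$ to be such a shift.

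The analysis of $Rq_!\,p^{*}\sP[r(n-r)]$ is the main obstacle and will occupy most of the proof. I plan to use the geometry of $\bU$: the map $q\colon\bU\to G$ is a smooth fibration with fiber $\CP^n\setminus V$, and the projections $\pi_1\colon\bV\to\CP^n$, $\pi_2\colon\bV\to G$ are respectively a Grassmann bundle with fiber $G(n,r)$ and a projective bundle with fiber $\CP^r$. In Brylinski's original argument over a characteristic zero field, one invokes the decomposition theorem and relative hard Lefschetz for the smooth projective morphism $\pi_2$; over our general Noetherian ring $S$ these tools are not available, and they must be replaced by the integral projective-bundle / Leray--Hirsch decomposition, which holds unconditionally because the fibers $\CP^r$ and $G(n,r)$ admit cell decompositions with torsion-free cohomology. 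The delicate point is to verify that the non-perverse contributions to the perverse cohomology of $Rq_!\,p^{*}\sP[r(n-r)]$ genuinely reduce to shifts of constant sheaves (rather than to more complicated locally constant sheaves with possibly torsion monodromy), which is where the simple connectivity of $G$ plays the crucial role.

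Finally, the statement for the dual perverse $t$-structure $p^{+}$ follows by Verdier duality. Using $\pi_1$ smooth of relative complex dimension $r(n-r)$ (so $\pi_1^{!}=\pi_1^{*}[2r(n-r)]$) together with $\pi_2$ proper (so $\bD R\pi_{2*}\simeq R\pi_{2*}\bD$), I would check directly that $\bD\circ\Rad\simeq\Rad\circ\bD$. Then for $\sP$ in the heart of $p^{+}$ and $k\neq 0$,
\[{}^{p^{+}}\sH^{k}(\Rad(\sP))=\bD\bigl({}^{p}\sH^{-k}(\bD\Rad(\sP))\bigr)=\bD\bigl({}^{p}\sH^{-k}(\Rad(\bD\sP))\bigr),\]
which by the first part, applied to the perverse sheaf $\bD\sP$, is the Verdier dual of a shift of a constant sheaf on the smooth Grassmannian $G$, and is therefore itself a shift of a constant sheaf.
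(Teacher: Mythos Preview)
Your overall framework (the adjunction triangle, the identification of the middle term with a complex of constant sheaves, and the self-duality argument for $p^{+}$) matches the paper's, but the core step of your proposal is wrong.

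You assert that it suffices to show the leftmost term $Rq_{!}\,p^{*}\sP[r(n-r)]$ (with $p=p_1\circ j_{\bU}$, $q=p_2\circ j_{\bU}$) has all its perverse cohomology sheaves equal to shifts of constant sheaves, and you propose to prove this via a Leray--Hirsch/projective-bundle decomposition. This claim is false. Take $n=1$, $r=0$, and $\sP=i_{x*}S$ a skyscraper at $x\in\CP^1$; then $Rq_{!}\,p^{*}\sP=j_{!}S_{\CP^1\setminus\{x\}}$, whose only nonzero perverse cohomology is ${}^{p}\sH^{1}=j_{!}S[1]$, which is \emph{not} a shifted constant sheaf. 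The reason your Leray--Hirsch plan cannot succeed is that such decompositions compute $Rq_{!}$ of a complex \emph{pulled back along $q$}, whereas here the input $p^{*}\sP$ is pulled back along the other projection $p$; the fiberwise cohomology $H^{*}_{c}(\CP^n\setminus V,\sP)$ genuinely depends on $V$. Note also that if your claim were true and shifts of constant sheaves really formed a Serre subcategory of the perverse heart, the long exact sequence would force ${}^{p}\sH^{0}(\Rad(\sP))$ to be constant as well, which is false for any interesting $\sP$.

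What the paper actually does with this same triangle is much weaker and is enough only for $k<0$: it observes that $q=p_2\circ j_{\bU}$ factors as a $\C^{r+1}$-bundle followed by a $\CP^{n-r-1}$-bundle, so by Artin vanishing and the relative-dimension bound one gets $Rq_{!}\,p^{*}\sP\in{}^{p}D^{\geq r(n-r)+1}$. Hence ${}^{p}\sH^{k}(\Rad(\sP))\cong{}^{p}\sH^{k+r(n-r)}(Rp_{2*}p_1^{*}\sP)$ for $k<0$, and the latter is a shifted constant sheaf. For $k>0$ the paper uses the \emph{other} adjunction triangle $i_{\bV!}i_{\bV}^{!}p_1^{*}\sP\to p_1^{*}\sP\to Rj_{\bU*}j_{\bU}^{*}p_1^{*}\sP\xrightarrow{+1}$, rewrites $i_{\bV}^{!}p_1^{*}=i_{\bV}^{*}p_1^{*}[-2(n-r)]$ via smoothness, and runs the dual $t$-exactness estimate for $R(p_2\circ j_{\bU})_{*}$. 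You need both triangles; a single one cannot cover all $k\neq 0$.

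Your treatment of the $p^{+}$ statement via the self-duality $\bD\circ\Rad\simeq\Rad\circ\bD$ is correct and is exactly what the paper does.
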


\begin{proof}[Proof of Theorem \ref{thm_BR}]
We first work with the perverse $t$-structure $p$. The adjunction triangle
\begin{equation}\label{eq_tri0}
j_{\bU!}j_{\bU}^*p_1^*(\sF)\to p_1^*(\sF)\to Ri_{\bV*}i_{\bV}^*p_1^*(\sF)\xrightarrow{+1},
\end{equation}
induces a distinguished triangle
\begin{equation}\label{eq_tri1}
Rp_{2*}j_{\bU!}j_{\bU}^*p_1^*(\sF)[r(n-r)]\to Rp_{2*}p_1^*(\sF)[r(n-r)] \to \Rad(\sF)\xrightarrow{+1}.
\end{equation}
Notice that the composition $p_2\circ j_\bU\colon \bU\to G(n+1, r+1)$ is a fiber bundle with fiber $\CP^n\setminus \CP^r$, where $\CP^r$ is realized as a $r$-dimensional projective subspace of $\CP^n$. Projecting from $\CP^r$ defines another fiber bundle map $\CP^n\setminus \CP^r\to \CP^{n-r-1}$ with fiber the affine space $\C^{r+1}$. This bundle structure on every fiber of  $p_2\circ j_\bU$ implies that $p_2\circ j_\bU$ factors through two fiber bundle maps: one is an affine bundle map with fiber $\C^{r+1}$ and the other is a projective bundle map with fiber $\CP^{n-r-1}$. Since  for an affine map $f$, $Rf_!$ preserves half of the perverse $t$-structure (e.g., see \cite[Theorem 10.3.69]{MS}) and for  a proper map $f$ of fiber dimension $n-r-1$, $Rf_*=Rf_!$ also preserves half of the perverse $t$-structure up to degree $n-r-1$ (e.g., see \cite[Corollary 10.3.30]{MS}), we have
\begin{equation}\label{eq_t}
Rp_{2*}j_{\bU!}=Rp_{2!}j_{\bU!}=R(p_2\circ j_{\bU})_!: \,^{p}D^{\geq\bullet}_c(\bU, S)\to \,^{p}D^{\geq\bullet-(n-r-1)}_c(G(n+1, r+1), S).
\end{equation}
Now, given a perverse sheaf $\sP$ on $\CP^n$, since $G(n+1, r+1)$ has dimension $(r+1)(n-r)$, $p_1^*\sP[(r+1)(n-r)]$ is a perverse sheaf on $\CP^n\times G(n+1, r+1)$, e.g., see \cite[Corollary 10.2.25]{MS}. Since $j_\bU$ is an open embedding, $j_\bU^*$ is exact with respect to the perverse $t$-structure. Therefore, 
\[
j_{\bU}^*p_1^*(\sP)[(r+1)(n-r)]\in \,^pD^{\geq 0}_c(\bU, S).
\]
Thus, by \eqref{eq_t} we have
\[
Rp_{2*}j_{\bU!}j_{\bU}^*p_1^*(\sP)[(r+1)(n-r)]\in \,^{p}D^{\geq -(n-r-1)}_c(G(n+1, r+1), S),
\]
or equivalently
\[
Rp_{2*}j_{\bU!}j_{\bU}^*p_1^*(\sP)\in \,^{p}D^{\geq r(n-r)+1}_c(G(n+1, r+1), S).
\]
Taking the perverse cohomology long exact sequence of \eqref{eq_tri1}, we have
\[
\cdots\to \,^p \sH^{k+r(n-r)}(Rp_{2*}p_1^*(\sP)) \to \, ^p\sH^k(\Rad(\sP)) \to  \,^p\sH^{k+r(n-r)+1}(Rp_{2*}j_{\bU!}j_{\bU}^*p_1^*(\sP))\to \cdots
\]
By the above arguments, we know that $^p\sH^{k+r(n-r)+1}(Rp_{2*}j_{\bU!}j_{\bU}^*p_1^*(\sP))=0$ for $k<0$. Hence, $^p\sH^k(\Rad(\sP))\cong \, ^p \sH^{k+r(n-r)}(Rp_{2*}p_1^*(\sP))$ for $k<0$. Since all cohomology sheaves of $Rp_{2*}p_1^*(\sP)$ are shifts of local systems, it follows that $^p \sH^{k+r(n-r)}(Rp_{2*}p_1^*(\sP))\cong\,^p\sH^k(\Rad(\sP))$ is also the shift of a local system for any $k<0$. This is due to the fact that the standard $t$-structure and the perverse $t$-structure differ in this case just by a shift. Moreover, since $G(n+1, r+1)$ is simply-connected, all these local systems are constant sheaves.

To show that $^p\sH^k(\Rad(\sP))$ are shifts of local systems for $k>0$, we can dualize the above arguments. The analogous distinguished triangle of \eqref{eq_tri0} is
\begin{equation}\label{eq_tria}
i_{\bV!}i_{\bV}^!p_1^*(\sP) \to p_1^*(\sP)\to   Rj_{\bU*}j_{\bU}^*p_1^*(\sP)\xrightarrow{+1}.
\end{equation}
Since $i_{\bV}$ is a closed embedding, we have $i_{\bV!}=Ri_{\bV*}$. Since $p_1$ and $p_1\circ i_\bV$ are submersions of relative dimensions $(r+1)(n-r)$ and $r(n-r)$, respectively, we have $p_1^!=p_1^*[2(r+1)(n-r)]$ and (e.g., see \cite[Corollary 10.2.25]{MS})
\[
i_\bV^!\,p_1^!=(p_1\circ i_\bV)^!=(p_1\circ i_\bV)^*[2r(n-r)]=i_\bV^* p_1^* [2r(n-r)].
\]
Thus, 
\[
i_{\bV}^!p_1^*=i_{\bV}^!p_1^![-2(r+1)(n-r)]=i_\bV^* p_1^* [-2(n-r)].
\]
So the distinguished triangle \eqref{eq_tria} can also be written as 
\[
Ri_{\bV*}i_{\bV}^*p_1^*(\sP)[-2(n-r)] \to p_1^*(\sP)\to   Rj_{\bU*}j_{\bU}^*p_1^*(\sP)\xrightarrow{+1}.
\]
By applying $Rp_{2*}$, we get the following distinguished triangle analogous to \eqref{eq_tri1},
\[
 \Rad(\sP) \to Rp_{2*}p_1^*(\sP)[(r+2)(n-r)] \to Rp_{2*} Rj_{\bU*}j_{\bU}^*p_1^*(\sP)[(r+2)(n-r)]\xrightarrow{+1},
\]
and the perverse sheaf cohomology long exact sequence is of the form
\[
\cdots\to \,^p \sH^{k+(r+2)(n-r)-1}(Rp_{2*}Rj_{\bU*}j_{\bU}^*p_1^*(\sP)) \to \, ^p\sH^k(\Rad(\sP)) \to  \,^p\sH^{k+(r+2)(n-r)}(Rp_{2*}p_1^*(\sP))\to \cdots
\]
We can use similar arguments as before to show that
\[
j_{\bU}^*p_1^*(\sP)[(r+1)(n-r)]\in \,^pD^{\leq 0}_c(\bU, S)
\]
and
\begin{equation*}
Rp_{2*}Rj_{\bU*}=R(p_2\circ j_{\bU})_*: \,^{p}D^{\leq \bullet}_c(\bU, S)\to \,^{p}D^{\leq\bullet+n-r-1}_c(G(n+1, r+1), S).
\end{equation*}
Thus, 
\[
Rp_{2*}Rj_{\bU*}j_{\bU}^*p_1^*(\sP)\in \,^pD^{\leq (r+2)(n-r)-1}_c(\bU, S),
\]
and together with the above long exact sequence, we have
\[
\, ^p\sH^k(\Rad(\sP)) \cong  \,^p\sH^{k+(r+2)(n-r)}(Rp_{2*}p_1^*(\sP))
\]
for any $k>0$. Since, as before,  $^p\sH^{k+(r+2)(n-r)}(Rp_{2*}p_1^*(\sP))$ are shifts of local systems (hence of constant sheaves), we finished the proof of Theorem \ref{thm_BR} for perverse sheaves. 

To prove the assertion  about the dual perverse $t$-structure $p^+$, we need to show that the transformation $\Rad$ is self-dual, i.e., $$\Rad= \bD_{G(n+1, r+1)}\circ\Rad\circ \bD_{\CP^n},$$ where $\bD_{G(n+1, r+1)}$ and $\bD_{\CP^n}$ denote the Verdier dual functors on $G(n+1, r+1)$ and $\CP^n$, respectively. In fact, we recall that 
\[
\Rad(\sF)=Rp_{2*}Ri_{\bV*}i_{\bV}^*p_1^*(\sF)[r(n-r)]=R(p_2\circ i_\bV)_*(p_1\circ i_\bV)^*(\sF)[r(n-r)].
\]
Since both $p_2$ and $i_{\bV}$ are proper maps, so is $p_2\circ i_\bV$, and hence $R(p_2\circ i_\bV)_*$ is self-dual. On the other hand, since $p_1\circ i_\bV: \bV\to \CP^n$ is a submersion of  relative dimension $r(n-r)$, we have (e.g., see \cite[Corollary 10.2.25]{MS})
\[
\bD_{\bV}\circ (p_1\circ i_\bV)^*\circ \bD_{\CP^n}=(p_1\circ i_\bV)^!=(p_1\circ i_\bV)^*[2r(n-r)]. 
\]
Therefore, 
\begin{align*}
\bD\circ \Rad\circ \bD(\sF)&=\bD\big( R(p_2\circ i_\bV)_*(p_1\circ i_\bV)^* \bD(\sF)[r(n-r)]\big)\\
&=\bD R(p_2\circ i_\bV)_*(p_1\circ i_\bV)^* \bD(\sF)[-r(n-r)]\\
&=R(p_2\circ i_\bV)_!(p_1\circ i_\bV)^!(\sF)[-r(n-r)]\\
&=R(p_2\circ i_\bV)_*(p_1\circ i_\bV)^*(\sF)[2r(n-r)-r(n-r)]\\
&= \Rad(\sF).
\end{align*}
Now, suppose $\sP$ is in the heart of the dual perverse $t$-structure $p^+$. Then $\bD(\sP)$ is a perverse sheaf. By the first part of the theorem, we know that
$^p\sH^k(\Rad(\bD(\sP)))$ is the shift of a constant sheaf for $k\neq 0$. Since $\Rad=\bD\circ\Rad\circ \bD$ and $\bD\circ \,^p\sH^k\circ \bD=\, ^{p^+}\sH^{-k}$, we have
\[
^p\sH^k(\Rad(\bD (\sP)))=
\,^{p}\sH^k(\bD\,\Rad\,\bD(\bD(\sP)))=
\,^{p}\sH^k(\bD\,\Rad(\sP))
=\bD (\,^{p^+}\sH^{-k}(\Rad(\sP)))
\]
is the shift of a constant sheaf for $k\neq 0$. Since Verdier duality preserves constant sheaves up to a shift, it follows that $^{p^+}\sH^{-k}(\Rad(\sP))$ is the shift of a constant sheaf for $k\neq 0$. This proves the second part of the theorem. 
\end{proof}

\section{Some transversality results}
Given a linear space $\C^n$, we consider its standard projective compactification $\C^n\subset \CP^n$. 
Fix a linear subspace $N\subset \C^n$ with $\dim N=n-m$. Every element $\alpha\in \C^n/N$ corresponds to an affine subspace $\tilde{\alpha}\subset \C^n$. The closure of $\tilde{\alpha}$ in $\CP^n$ is a projective subspace of dimension $n-m$. Thus, $\C^n/N$ parametrizes a family of $(n-m)$-dimensional projective subspaces of $\CP^n$. By the universality of the Grassmannian, we have an injective map $\iota_N: \C^n/N\to G(n+1, n-m+1)$. Denote the image of $\iota_N$ by $I_N$. 

Let $\GA(n, \C)$ be the general affine group of $\C^n$. The natural action of $\GA(n, \C)$ on $\C^n$ extends to $\CP^n$. In fact, $\GA(n, \C)$ can be identified with the subgroup of $\mathrm{PGL}(n+1, \C)$ fixing the hyperplane at infinity. Thus, the natural $\mathrm{PGL}(n+1, \C)$-action on $G(n+1, n-m+1)$ restricts to a natural $\GA(n, \C)$-action on $G(n+1, n-m+1)$. 
\begin{lemma}
The above $\GA(n, \C)$-action on $G(n+1, n-m+1)$ has two orbits. The open orbit is equal to
$\bigcup_{N\subset \C^n}I_N$,
where the union is over all $(n-m)$-dimensional linear subspaces $N$ of $\C^n$. 
\end{lemma}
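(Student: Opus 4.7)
The plan is to split $G(n+1, n-m+1)$ into the locus $Z$ of $(n-m)$-dimensional projective subspaces contained in the hyperplane at infinity $H_\infty := \CP^n\setminus \C^n$ and its open complement $W = G(n+1, n-m+1)\setminus Z$, verify that both pieces are $\GA(n,\C)$-stable, and then check transitivity on each. For the closed piece, $\GA(n,\C)$ preserves $H_\infty$ setwise by definition, so $Z$ is $\GA(n,\C)$-stable; under the identification $H_\infty \cong \CP^{n-1}$ it is the Schubert subvariety $G(n, n-m+1) \hookrightarrow G(n+1, n-m+1)$, which is closed. The restriction of the $\GA(n,\C)$-action to $H_\infty$ factors through the quotient $\GA(n,\C) \twoheadrightarrow \mathrm{PGL}(n,\C)$ obtained by collapsing translations, and the standard $\mathrm{PGL}(n,\C)$-action on $G(n, n-m+1)$ is transitive, so $Z$ is a single $\GA(n,\C)$-orbit.

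For the open piece $W$, I would argue that any $V \in W$ fails to lie in $H_\infty$, so $V \cap \C^n$ is an affine $(n-m)$-subspace $\tilde{\alpha}$, which has a unique underlying linear direction $N \subset \C^n$ (the translate of $\tilde{\alpha}$ through the origin) and a unique class $\alpha \in \C^n/N$; in particular $V = \iota_N(\alpha) \in I_N$. Conversely every $I_N$ consists of projective closures of affine $(n-m)$-subspaces of $\C^n$ and hence lies in $W$, which gives the set-theoretic equality $W = \bigcup_N I_N$. Transitivity of $\GA(n,\C)$ on $W$ then reduces to transitivity on the set of affine $(n-m)$-subspaces of $\C^n$: given two such subspaces I first translate each so that it passes through the origin (using elements of $\C^n \subset \GA(n,\C)$), and then apply an element of $\mathrm{GL}(n,\C) \subset \GA(n,\C)$ that sends one linear subspace to the other.

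I do not expect a serious obstacle; the argument is mostly a bookkeeping exercise about the affine/projective compactification. The one point deserving a little care is verifying that $\iota_N \colon \C^n/N \hookrightarrow G(n+1, n-m+1)$ indeed lands in $W$ (so that the $I_N$ are genuinely disjoint from $Z$), but this is immediate: the projective closure of any affine $(n-m)$-subspace of $\C^n$ meets $\C^n$ and therefore cannot lie entirely in $H_\infty$. The only slightly subtle coherence check is that a single $V \in W$ determines $N$ unambiguously, which follows because the direction $N$ is recovered as $\overline{\tilde{\alpha}} \cap H_\infty$ viewed inside $\C^n$ via a fixed linear chart.
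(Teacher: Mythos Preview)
Your proposal is correct and follows essentially the same approach as the paper: both arguments partition $G(n+1,n-m+1)$ according to whether the projective $(n-m)$-space lies inside $H_\infty$ or not, and then verify transitivity on each piece. Your write-up simply makes explicit the transitivity arguments (factoring through $\mathrm{PGL}(n,\C)$ on $Z$, and translate-then-$\mathrm{GL}$ on $W$) that the paper asserts without detail.
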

\begin{proof}
Denote by $A$ the open subset of $G(n+1, n-m+1)$ corresponding to $(n-m)$-dimensional projective subspaces of $\CP^n$ not contained in the hyperplane at infinity $H_\infty$. Denote by $B$ the closed subset of $G(n+1, n-m+1)$ corresponding to $(n-m)$-dimensional projective subspaces of $\CP^n$ contained in $H_\infty$. Clearly, the $\GA(n, \C)$-action preserves the subsets $A$ and $B$. Moreover, $\GA(n, \C)$ acts transitively on both $A$ and $B$. 

Every element in $A$ is the closure of some $(n-m)$-dimensional affine subspace of $\C^n$, and hence is in the image $I_N$ for some $N$. 
\end{proof}

\begin{cor}\label{cor_trans}
Let $f\colon Z\to G(n+1, n-m+1)$ be an algebraic map from a smooth algebraic variety $Z$. Then for a general $(n-m)$-dimensional linear subspace $N$ of $\C^n$, $f^{-1}(I_N)$ is smooth and $\mathrm{codim}_{G(n+1, n-m+1)}I_N=\mathrm{codim}_Z f^{-1}(I_N)$. 
\end{cor}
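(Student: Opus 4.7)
The plan is to exhibit the open orbit $A \subset G(n+1, n-m+1)$ from the preceding lemma as an affine bundle over the Grassmannian $G(n, n-m)$ of $(n-m)$-dimensional linear subspaces of $\C^n$, with fibers precisely the sets $I_N$, and then apply generic smoothness (valid in characteristic zero) to the induced morphism from $f^{-1}(A)$.

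First, I would construct a projection $\pi \colon A \to G(n, n-m)$ as follows. Any $(n-m)$-dimensional projective subspace $P \subset \CP^n$ not contained in $H_\infty$ meets $H_\infty$ in an $(n-m-1)$-dimensional projective subspace, which is the projectivization of a unique $(n-m)$-dimensional linear subspace $N \subset \C^n$; define $\pi(P) := N$. A direct check shows that $\pi$ is an algebraic morphism with $\pi^{-1}(N) = I_N$ for every $N$, realizing $A$ as an affine $\C^m$-bundle over $G(n, n-m)$. In particular, the sets $I_N$ are pairwise disjoint and jointly cover $A$, in agreement with the orbit decomposition of the previous lemma.

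Second, since $A$ is open in $G(n+1, n-m+1)$ and $Z$ is smooth, $f^{-1}(A)$ is smooth. Consider the composition
\[
g := \pi \circ f|_{f^{-1}(A)} \colon f^{-1}(A) \longrightarrow G(n, n-m).
\]
If $g$ fails to be dominant on each irreducible component, then for $N$ in a nonempty Zariski-open subset of $G(n, n-m)$ one has $f^{-1}(I_N) = g^{-1}(N) = \emptyset$ and the conclusion holds vacuously. Otherwise, generic smoothness in characteristic zero guarantees that for $N$ in a nonempty Zariski-open subset of $G(n, n-m)$ the fiber $f^{-1}(I_N)$ is smooth of codimension $\dim G(n, n-m) = m(n-m)$ inside $f^{-1}(A)$, and hence inside $Z$. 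The codimension claim then matches $\mathrm{codim}_{G(n+1, n-m+1)} I_N = m(n-m+1) - m = m(n-m)$.

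The main point requiring care is the verification that $\pi$ is a genuine algebraic morphism rather than a merely set-theoretic map; this will follow from the standard description of $A$ as the total space of an incidence-type bundle over $G(n, n-m)$. An alternative approach that sidesteps this point is to apply Kleiman's transversality theorem to the transitive $\GA(n, \C)$-action on $A$, using that any two $I_N$'s are $\mathrm{GL}(n, \C)$-translates of one another, so that varying the group element generically produces a generic $N$.
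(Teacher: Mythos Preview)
Your argument is correct, but your primary route is genuinely different from the paper's. The paper works directly with the transitive $\GA(n,\C)$-action on the open orbit $A$: one observes that $\sigma\cdot I_{N_0}=I_{\bar\sigma(N_0)}$ (with $\bar\sigma$ the linear part of $\sigma$), and then Kleiman's transversality theorem immediately gives smoothness and the expected codimension of $f^{-1}(\sigma\cdot I_{N_0})$ for generic $\sigma$, hence for generic $N$. This is exactly the ``alternative approach'' you sketch in your final paragraph.

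Your main approach instead constructs an explicit algebraic projection $\pi\colon A\to G(n,n-m)$ with fibers $I_N$, and then invokes generic smoothness for the composite $g=\pi\circ f$. This is more elementary in that it avoids Kleiman and uses only generic smoothness in characteristic zero; it also yields the pleasant structural fact that $A$ is an affine $\C^m$-bundle over $G(n,n-m)$. The cost is that you must verify $\pi$ is algebraic (which you flag but do not carry out) and must handle the dominance/non-dominance dichotomy component by component. The paper's Kleiman argument is shorter and sidesteps both of these points, at the price of quoting a slightly heavier theorem.
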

\begin{proof}
Let $Z_A=f^{-1}(A)$, where $A$ is the open $\GA(n, \C)$-orbit in $G(n+1, n-m+1)$. Since $I_N\subset A$, without loss of generality we can replace $G(n+1, n-m+1)$ by $A$ and replace $Z$ by $Z_A$. Notice that for any $\sigma\in \GA(n, \C)$, $\sigma\cdot I_N=I_{\sigma(N)}$. We need to show that fixing a $(n-m)$-dimensional subspace $N_0$ of $\C^n$, $f^{-1}(\sigma(I_{N_0}))$ is smooth and of expected dimension. This follows from Kleiman's transversality theorem (\cite{K74}[Theorem 2]).
\end{proof}
In the special case when $f$ is the identity map, we have the following corollary. 
\begin{cor}\label{cor_smooth}
The image $I_N$ is a locally closed smooth subvariety of $G(n+1, n-m+1)$. 
\end{cor}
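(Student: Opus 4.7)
The plan is to deduce the corollary directly from Corollary \ref{cor_trans} by specializing to $f = \mathrm{id}_{G(n+1, n-m+1)}$, so that $f^{-1}(I_N) = I_N$. For this identity map the codimension equality in Corollary \ref{cor_trans} is automatic, and the content of the corollary reduces to smoothness of $I_N$ for a general $(n-m)$-dimensional linear subspace $N \subset \C^n$. To pass from ``general $N$'' to every $N$, I would invoke transitivity of $\mathrm{GL}(n, \C) \subset \GA(n, \C)$ on the set of $(n-m)$-dimensional linear subspaces of $\C^n$: for any such $N$, write $N = \sigma(N_0)$ with $\sigma \in \GA(n, \C)$ and $N_0$ generic. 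Since $\sigma \cdot I_{N_0} = I_N$, as already noted in the proof of Corollary \ref{cor_trans}, and since $\sigma$ acts as an automorphism of $G(n+1, n-m+1)$, smoothness of $I_{N_0}$ transfers to $I_N$.

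For local closedness, which is implicit in the smoothness assertion, I would realize $I_N$ as the fiber over $[N]$ of an algebraic ``direction'' morphism $A \to G(n, n-m)$, where $A \subset G(n+1, n-m+1)$ is the open $\GA(n,\C)$-orbit from the preceding lemma: this morphism sends an $(n-m)$-dimensional projective subspace $L \not\subset H_\infty$ to the $(n-m)$-dimensional linear subspace of $\C^n$ corresponding to $L \cap H_\infty$. Since $A$ is open in $G(n+1, n-m+1)$, the fiber $I_N$ is then closed in $A$, and hence locally closed in $G(n+1, n-m+1)$.

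The only delicate point I foresee is the passage from the generic $N$ supplied by Kleiman transversality to every $N$, which is handled by the group-theoretic transport above; algebraicity of the direction map is a standard construction via incidence varieties over the Grassmannian, and smoothness transfer under an algebraic group action is routine.
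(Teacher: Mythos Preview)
Your approach is essentially the same as the paper's: both deduce the corollary from Corollary~\ref{cor_trans} by taking $f = \mathrm{id}_{G(n+1, n-m+1)}$. Your write-up is in fact more complete than the paper's one-line derivation, since you make explicit the passage from generic $N$ to every $N$ via the transitive $\mathrm{GL}(n,\C)$-action on $(n-m)$-dimensional linear subspaces (using $\sigma\cdot I_{N_0}=I_{\sigma(N_0)}$) and supply a separate argument for local closedness via the direction map $A\to G(n,n-m)$; the paper leaves both of these points implicit.
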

\begin{rmk}
The above corollary can also be derived from a stronger statement that under an appropriate choice of coordinates, $I_N$ is equal to a Schubert cell of $G(n+1, n-m+1)$. In fact, if we consider $G(n+1, n-m+1)=\mathrm{GL}(n+1, \C)/P$ with the parabolic subgroup $P$, then $I_N$ is the orbit of the left action of a Borel subgroup of $G(n+1, n-m+1)$ contained in $P$. This implies that $I_N$ is a Schubert cell. 
\end{rmk}


\section{Generic projection}
Let $\sF$ be a constructible complex on $\C^n$. Fix $m\in \{1, \ldots, n-1\}$, and let $F\colon \C^n\to \C^m$ be a general surjective linear map. We will first express $RF_*(\sF)$ in terms of the Brylinski-Radon transformation and, as an application, complete the proof of Theorem \ref{thm_main}. 

Let $j_{\C^n}\colon \C^n\hookrightarrow \CP^n$ be the inclusion map. Recall that any $(n-m)$-dimensional linear subspace $N$ of $\C^n$ defines a map $\iota_N: \C^n/N\to G(n+1, n-m+1)$. 
\bt\label{imp}
Under the above notations, if $F\colon \C^n\to \C^n/N=\C^m$ is the natural generic projection, then we have an isomorphism
\begin{equation}\label{eq_iso}
RF_*(\sF)[m(n-m)]\cong \iota_N^*\,\Rad(Rj_{\C^n*}(\sF)).
\end{equation}
\et
\begin{proof}
We use the notations of Section \ref{sec_BR} with $r=n-m$. Recall that $\bV\subset \CP^n\times G(n+1, n-m+1)$ is the universal family of projective subspaces, and $\bU\subset \CP^n\times G(n+1, n-m+1)$ is its complement. Denote the two projections by $q_1: \bV\to \CP^n$ and $q_2: \bV\to G(n+1, n-m+1)$. 
In other words, these projections are defined so that we have the following commutative diagram
\[
\xymatrix{
&\bV\ar[ld]_{q_1}\ar[rd]^{q_2}\ar[d]^{i_\bV}&\\
\CP^n\quad&\quad\CP^n\times G(n+1, n-m+1)\quad\ar[l]^{p_1\qquad\qquad\quad}\ar[r]_{\qquad p_2}&G(n+1, n-m+1).
}
\]
Next, we define $\bV^\circ\coloneqq \C^n\times_{\CP^n} \bV$, $\bV_N\coloneqq \bV\times_{G(n+1, n-m+1)} \C^n/N$ and $\bV_N^\circ\coloneqq \bV^\circ\otimes_{\bV} \bV_N$, i.e., all squares in the following diagram are Cartesian squares:
\begin{equation}\label{eq_squares}
\xymatrix{
\bV_N^\circ\ar[d]^{j_{N}}\ar[r]^{\iota_0}&\bV^\circ\ar[d]^{j_0}\ar[r]^{q_0}&\C^n\ar[d]^{j_{\C^n}}\\
\bV_N\ar[d]^{q_N}\ar[r]^{\iota_1}&\bV\ar[d]^{q_2}\ar[r]^{q_1}&\CP^n\\
\C^n/N\ar[r]^{\iota_N\quad\qquad}&G(n+1, n-m+1).&
}
\end{equation}
All the new maps in the Cartesian squares are labeled as in the diagram \eqref{eq_squares}. By definition, $q_N: \bV_N\to \C^n/N$ is a $\CP^{n-m}$ fiber bundle, and each fiber can be naturally identified with a projective subspace of $\CP^n$ containing a translate of $N$ in $\C^n$. Since $\bV_N^\circ =\bV_N\times_{\CP^n}\C^n$, the composition $q_N \circ j_N: \bV_N^\circ\to \C^n/N$ is an affine $\C^{n-m}$ fiber bundle, and the fiber over a point $\alpha\in \C^n/N$ is naturally identified with the corresponding affine subspace $\tilde\alpha\subset \C^n$. As $\alpha$ varies, this identification induces a bijection between $\bV^\circ_N$ and $\C^n$. In fact, it is easy to check from the definition that this bijection is the isomorphism given by the composition $q_0\circ \iota_0$. Therefore, the composition $(q_N\circ j_N)\circ (q_0\circ \iota_0)^{-1}: \C^n\to \C^n/N=\C^m$ is equal to the surjective linear map $F$.

Now, we prove the isomorphism \eqref{eq_iso} by using three base change isomorphisms corresponding to the three squares in diagram \eqref{eq_squares}. By definition, together with $p_2\circ i_{\bV}=q_2$ and $p_1\circ i_{\bV}=q_1$, we have
\[
\Rad(Rj_{\C^n*}(\sF))= Rp_{2*}Ri_{\bV*}i_{\bV}^*p_1^*(Rj_{\C^n*}(\sF))[m(n-m)]=Rq_{2*}q_1^*Rj_{\C^n*}(\sF)[m(n-m)].
\]
We claim that the following base change morphism is an isomorphism
\[
q_1^*Rj_{\C^n*}(\sF)\cong Rj_{0*}q_0^*(\sF).
\]
In fact, this follows from the usual base change isomorphism (e.g., see \cite[Proposition V.10.7(4)]{Bo}), by noting that 
$q_0$ and $q_1$ are submersions of the same relative dimension and using \cite[Corollary 10.2.25]{MS}. 

Combining the above two isomorphisms, we have
\[
\iota_N^*\,\Rad(Rj_{\C^n*}(\sF))\cong \iota_N^*Rq_{2*}Rj_{0*}q_0^*(\sF)[m(n-m)].
\]
Since $q_2$ and $q_N$ are proper, $Rq_{2!}=Rq_{2*}$ and $Rq_{N!}=Rq_{N*}$. Therefore, by the proper base change theorem (see \cite[Proposition V.10.7(3)]{Bo}), we have
\[
\iota_N^*Rq_{2*}Rj_{0*}q_0^*(\sF)\cong Rq_{N*}\iota_1^*Rj_{0*}q_0^*(\sF),
\]
and hence
\[
\iota_N^*\,\Rad(Rj_{\C^n*}(\sF))\cong Rq_{N*}\iota_1^*Rj_{0*}q_0^*(\sF)[m(n-m)].
\]
Finally, by our transversality result of Proposition \ref{cor_trans}, with respect to any constructible complexes on $\bV$ and $\bV^\circ$,  both $\iota_1$ and $\iota_0$ are non-characteristic for a general $N$. 
Therefore, we have another base change isomorphism (e.g., see \cite[Theorem 3.2.13 (ii), Corollary 4.3.7]{Dim04})
\[
\iota_1^* Rj_{0*}q_0^*(\sF)\cong Rj_{N*} \iota_0^* q_0^*(\sF).
\]
Thus, 
\[
\iota_N^*\,\Rad(Rj_{\C^n*}(\sF))\cong Rq_{N*}Rj_{N*} \iota_0^* q_0^*(\sF)[m(n-m)].
\]
We have argued that $q_0\circ \iota_0$ is an isomorphism and the composition $(q_N\circ j_N)\circ (q_0\circ \iota_0)^{-1}$ is equal to $F$. Therefore, 
\[
Rq_{N*}Rj_{N*} \iota_0^* q_0^*(\sF)\cong R(q_N\circ j_N)_* (q_0\circ \iota_0)^*(\sF)\cong RF_*(\sF)
\]
and hence
\[
\iota_N^*\,\Rad(Rj_{\C^n*}(\sF))\cong RF_*(\sF)[m(n-m)]. \qedhere
\]
\end{proof}

\smallskip

We can now prove our main result, as a consequence of Theorem \ref{imp}.
\begin{proof}[Proof of Theorem \ref{thm_main}]
Since $I_N\subset G(n+1, n-m+1)$ is of codimension $m(n-m)$, by Corollaries  \ref{cor_trans} and \ref{cor_smooth}, given a perverse sheaf $\sP$ on $G(n+1, n-m+1)$, $\iota_N^*(\sP)[-m(n-m)]$ is a perverse sheaf on $\C^n/N$ for a general choice of $N$ (e.g., see \cite[Proposition 10.2.27]{MS}). Therefore, using the fact that $j_{\C^n}\colon \C^n\hookrightarrow \CP^n$ is $t$-exact with respect to the perverse $t$-structure $p$ (e.g., see \cite[Corollary 10.3.30, Theorem 10.3.69]{MS}), Theorem \ref{thm_main} for perverse sheaves follows from Theorem \ref{imp} and Theorem \ref{thm_BR}. 

Notice that \cite[Proposition 10.2.27]{MS} also holds for the dual $t$-structure $p^+$. In fact, this can be derived from \cite[Proposition 10.2.27]{MS} together with \cite[Corollary 10.2.11, Corollary 10.2.25]{MS}. Moreover, Theorem \ref{thm_BR} also applies to dual perverse sheaves,  the same argument as above implies that Theorem \ref{thm_main} holds for the dual $t$-structure $p^+$. 
\end{proof}

\begin{proof}[Proof of Corollary \ref{cor_1}]
Assume that $\sP$ is a perverse sheaf on $\C^n$. Since $RF_!=\bD\circ RF_*\circ \bD$ and $p^+$ is the dual perverse $t$-structure, we have 
\begin{align*}
^p\sH^k(RF_!(\sP))&\cong\, ^p\sH^k(\bD \circ RF_*\circ \bD(\sP)) \cong \bD \, ^{p^+}\sH^{-k}(RF_*( \bD(\sP)))
\end{align*}
Since $\sP$ is a perverse sheaf, $\bD(\sP)$ is in the heart of the $t$-structure $p^+$. By the second part of Theorem \ref{thm_main}, when $k\neq 0$, $\, ^{p^+}\sH^k(RF_*( \bD(\sP))$ is the shift of a constant sheaf. Thus, when $k\neq 0$, $^p\sH^k(RF_!(\sP))$ is also the shift of a constant sheaf. The second part of the corollary can be derived from the first part of Theorem \ref{thm_main} by a similar argument. 
\end{proof}

\end{document}